\documentclass[12pt]{amsart}
\parindent=0cm
\usepackage{amssymb}
\usepackage{amsmath}
\usepackage{amsthm}
\usepackage{amsfonts}
\usepackage{bbm}
\usepackage{enumitem}
\usepackage{epsf,epsfig,amsfonts,graphicx}
\usepackage{color}
\usepackage[T1]{fontenc}
\usepackage{a4wide}
\newtheorem{theorem}{Theorem}

\newtheorem{remark}{Remark}
\newtheorem{proposition}{Proposition}
\newtheorem{corollary*}{Corollary}
\newtheorem{example}{Example}


\newcommand{\Aas}{A(s)}

\newcommand{\aff}{{\rm Aff}}

\newcommand{\affn}{{\rm Aff}(n)}

\newcommand{\bas}{b(s)}
\newcommand{\be}{\begin{equation}}
\newcommand{\ee}{\end{equation}}
\newcommand{\ben}{\begin{equation*}}

\newcommand{\een}{\end{equation*}}
\newcommand{\cas}{c_{s}}

\newcommand{\gl}{\textrm{Gl}}

\newcommand{\Identity}{\mathbbm{1}}
\newcommand{\kea}{k_{ea}}
\newcommand{\Kea}{{\rm K}_{ea}}
\newcommand{\ka}{k_{ga}}
\newcommand{\kga}{k_{ga}}

\newcommand{\R}{\mathbb{R}}

\newcommand{\sa}{s_{ga}}
\newcommand{\sea}{s_{ea}}

\newcommand{\sign}{{\rm sign}}

\newcommand{\trace}{{\rm tr}}

\newcommand{\weg}[1]{}
\newcommand{\z}{\mathbb{Z}}

\title{Solitons of the midpoint mapping and affine curvature}
\author{Christine Rademacher}
\address{Technische Hochschule N\"urnberg Georg Simon Ohm,
Fakult\"at Angewandte Mathematik, Physik und 
Allgemeinwissenschaften,
Postfach 210320, 90121 N\"urnberg, Germany}
\email{christine.rademacher@th-nuernberg.de}
\author{Hans-Bert Rademacher}
\address{Universit\"at Leipzig, Mathematisches Institut, 
04081 Leipzig, Germany}
\email{hans-bert.rademacher@math.uni-leipzig.de}
\date{2020-07-28}
\begin{document}

\baselineskip 18pt

\begin{abstract}
	For a polygon $x=(x_j)_{j\in \z}$ in $\R^n$ we consider
	the midpoints polygon  $(M(x))_j=\left(x_j+x_{j+1}\right)/2\,.$
	We call a polygon a \emph{soliton} of
	the midpoints mapping $M$ if
	its midpoints polygon is the image of the polygon
	under an invertible affine map.
	We show that a large class of these polygons lie on an orbit of a 
	one-parameter subgroup of the affine group
	acting on $\R^n.$ These smooth curves
	are also characterized as solutions
	of the differential equation
	$\dot{c}(t)=Bc (t)+d$
	for a matrix $B$ and a vector $d.$
	For $n=2$ these curves 
	are curves of
	constant generalized-affine curvature 
	$\kga=\kga(B)$ depending on $B$ parametrized
	by generalized-affine arc length unless
	they are parametrizations of a 
	parabola, an ellipse,
	or a hyperbola.
\end{abstract}
\keywords{discrete curve shortening, polygon, 
	affine mappings, soliton, midpoints polygon, 
	linear system of ordinary differential equations}
\subjclass[2010]{51M04 (15A16 53A15)}

\maketitle

\section{Introduction}
We consider an infinite polygon $(x_j)_{j \in \z}$ given by its vertices
$x_j \in \R^n$
in an $n$-dimensional real vector space $\R^n$ resp. an $n$-dimensional affine
space $\mathbb{A}^n$ modelled after $\R^n.$  
For a 
parameter $\alpha \in (0,1)$ we introduce the 
polygon $M_{\alpha}(x)$ whose vertices
are given by
\begin{equation*}
 \left(M_{\alpha}(x)\right)_j:=(1-\alpha) x_j+\alpha x_{j+1}\,.
\end{equation*}
For $\alpha=1/2$ this defines the {\em midpoints polygon}
$M(x)=M_{1/2}(x).$
On the space $\mathcal{P}=\mathcal{P}(\R^n)$ of polygons
in $\R^n$ this defines a 
discrete curve shortening 
process $M_{\alpha}: \mathcal{P}\longrightarrow \mathcal{P},$
already considered by Darboux~\cite{Darboux1878}
in the case of a \emph{closed} resp.
\emph{periodic} polygon.
For a discussion of this elementary geometric construction
see Berlekamp et al.~\cite{Berlekamp1965}.

The mapping $M_{\alpha}$ is
invariant under the canonical action
of the affine group.
The {\em affine group} $\aff (n)$
in dimension $n$ is the 
set of {\em affine maps} $(A,b): \R^n \longrightarrow \R^n,
x \longmapsto Ax +b.$ 
Here  $A \in \gl(n)$ is an invertible matrix and
 $b \in \R^n$ a vector. 
 The translations $x \longmapsto x+b$
determined by a vector $b$ form a subgroup isomorphic to $\R^n.$
Let $\alpha \in (0,1).$
We call a polygon $x_j$ a {\em soliton} for the 
process $M_{\alpha}$
(or affinely invariant under $M_{\alpha}$)
if there is an affine map $(A,b)\in \affn$
such that
\begin{equation}
\label{eq:soliton}
\left(M_{\alpha}(x)\right)_j=A x_j +b
\end{equation}
for all $j \in \mathbb{Z}.$
In Theorem~\ref{thm:polygon-soliton} we 
describe these solitons explicitely and
discuss under which assumptions they lie
on the orbit of a one-parameter subgroup
of the affine group acting canonically on $\R^n.$
We call a smooth curve $c:\R \longrightarrow \R^n$
a \emph{soliton} of the mapping $M_{\alpha}$
resp. \emph{invariant} under the mapping $M_{\alpha}$
	if there is for some $\epsilon >0$
	a smooth mapping
	$s \in (-\epsilon,\epsilon)\longmapsto (A(s),b(s))\in 
	\aff (n)$ such that for all 
	$s\in (-\epsilon,\epsilon)$ and $t \in \R:$
	\begin{equation}
	\label{eq:cstilde}
	\tilde{c}_s(t):=(1-\alpha)c(t)+\alpha c(t+s)=
	A(s)c(t)+b(s)\,.
	\end{equation}
	Then for some $t_0 \in \R$ and $s \in (-\epsilon,\epsilon)$ the
	polygon $x_j=c(js+t_0), j\in \mathbb{Z}$	
	is a soliton of $M_{\alpha}.$
	The parabola is an example of a soliton
	of $M=M_{1/2},$
	cf. Figure~\ref{fig:one}
	and Example~\ref{exa:n=2}, Case (e).
	We show in Theorem~\ref{thm:curve-soliton}
	that the smooth curves invariant under 
	$M_{\alpha}$ coincide with the orbits of 
	a one-parameter subgroup of the affine group
	$\aff(n)$ acting canonically on $\R^n.$ 
	For $n=2$ we give a 
	characterization of these curves in terms of
	the \emph{general-affine curvature} in 
	Section~\ref{sec:affine-curvature}. 
		
	The authors discussed 
	{\em solitons,} i.e. curves affinely invariant
	under the curve shortening process 
	$T: \mathcal{P}(\R^n)\longrightarrow 
	\mathcal{P}(\R^n)$
	with
	\begin{equation}
	\label{eq:rademacher1}
	\left( T(x)\right)_j=
	\frac{1}{4}\left\{x_{j-1}+2x_j+x_{j+1}\right\}
	\end{equation}	
	in \cite{Rademacher2017}. 
	The solitons of $M=M_{1/2}$ form a subclass of 
	the solitons of $T,$ since 
	$\left(T(x)\right)_j=
	\left(M^2(x)\right)_{j-1}.$
	Instead of the discrete evolution  
	of polygons one can also investigate the 
	evolution of polygons under a linear 
	flow, cf. Viera and Garcia~\cite{VG}
	and \cite[sec.4]{Rademacher2017}
	or a non-linear flow, cf.
	Glickenstein and Liang~\cite{GL}.
\begin{figure}
		\includegraphics[width=0.6\textwidth]{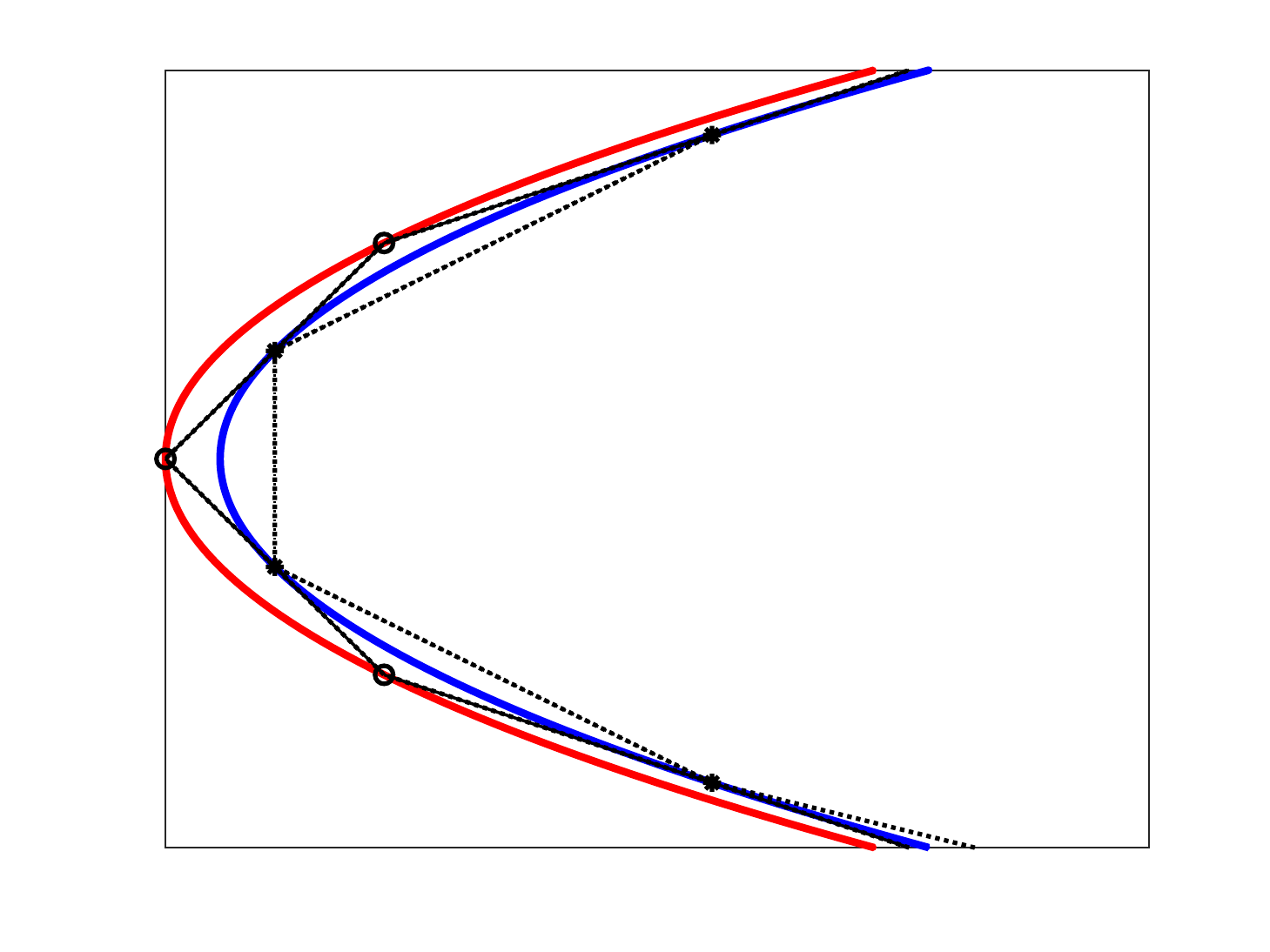}  
		\caption{The parabola $c(t)=(t^2/2 , t)$ as soliton of the midpoints map $M.$}
		\label{fig:one}
\end{figure}
\section{The affine group and systems of linear 
differential equations of first order}
The affine group
$\aff(n)$ is a semidirect product of the general
linear group $\gl(n)$ and the group 
$\mathbb{R}^n$ of translations.
There is a linear representation 
\begin{equation*}
(A,b) \in \aff(n) 
\longrightarrow
\left(
\begin{array}{c|c}
 A & b\\
 \hline 0 &1
\end{array}
\right) \in
\gl(n+1),
\end{equation*}
of the affine group in the general linear group
$\gl(n+1),$
cf.~\cite[Sec.5.1]{Kuehnel2011}.
We use the following identification 
\begin{equation}
\label{eq:identification}
 \left(
\begin{array}{c|c}
 A & b\\
 \hline 0 &1
\end{array}
\right) 
\left(
\begin{array}{c}
 x\\ \hline 1
\end{array}
\right)
=
\left(
\begin{array}{c}
 A x+b\\ \hline 1
\end{array}
\right)\,.
\end{equation}
Hence we can identify the image of a vector
$x \in \R^n$ under the affine map $x \longmapsto
Ax+b$ with the image $\left(
\begin{array}{c}Ax +b\\ \hline 1
\end{array}\right)$
of the extended
vector 
$\left(
\begin{array}{c}x\\ \hline 1\end{array}\right)$.
Using this identification we can write down the solution of 
an inhomogeneous system of linear differential equations
with constant coefficients using the power series
$F_B(t)$ which we introduce now:
\begin{proposition}
\label{pro:fb}
 For a real $(n,n)$-matrix $B \in M_{\R}(n)$ we denote by 
 $F_B(t)\in M_{\R}(n)$ the following power series:
 \begin{equation}
 \label{eq:fb-def}
  F_B(t)=\sum_{k=1}^{\infty}
  \frac{t^k}{k!}B^{k-1}\,.
 \end{equation}
  \begin{itemize}
 	\item[(a)] We obtain for its derivative:
 	\be
 	\label{eq:fb-derivative}
 	\frac{d}{dt} F_B(t)=\exp(Bt)=BF_B(t)+\Identity\,.
 	\ee
 	The function $F_B(t)$ satisfies the following functional
 	equation:
 	\be
 	\label{eq:fb-functional-eq}
 	F_B(t+s)=F_B(s)+\exp(Bs)F_B(t)\,,
 	\ee 
 	resp. for $j \in \mathbb{Z}, j\ge 1:$
 	\begin{eqnarray*}
 	\nonumber
 	F_B(j)=\left\{\Identity+\exp(B)+\exp(2B)+\ldots+
 	\exp((j-1)B)\right\}F_B(1)\\
 	\label{eq:fb-functional-eq-j}
 	=\left(\exp(B)-\Identity\right)^{-1}
 	\left(\exp(jB)-\Identity\right)F_B(1)\,.
 	\end{eqnarray*}
 	
 	\item[(b)] 
 The solution $c(t)$ of the 
 inhomogeneous system of linear differential equations
 \begin{equation}
  \label{eq:diffeq}
 \dot{c}(t)=B c(t)+d
 \end{equation}
 with constant coefficients 
 (i.e. $B\in M_{\R}(n,n),d\in \R^n$)
 and
 with initial condition $v=c(0)$
 is given by:
 \begin{equation}
 \label{eq:ct}
  c(t)=v+F_B(t)\left(Bv+d\right)
  =\exp(Bt)(v)+F_B(t)(d).  
 \end{equation}
\end{itemize}
 \end{proposition}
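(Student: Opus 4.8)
The plan is to verify both assertions in part (b) directly, starting from the closed-form candidate and checking it satisfies the differential equation and the initial condition; uniqueness then follows from the standard existence-and-uniqueness theorem for linear systems. Concretely, I would first establish part (a), since part (b) is built on it. For \eqref{eq:fb-derivative}, differentiate the power series \eqref{eq:fb-def} term by term: $\frac{d}{dt}F_B(t)=\sum_{k=1}^{\infty}\frac{t^{k-1}}{(k-1)!}B^{k-1}=\exp(Bt)$, which converges locally uniformly so the term-by-term differentiation is justified. The identity $\exp(Bt)=BF_B(t)+\Identity$ is immediate by comparing the series $\sum_{k\ge 0}\frac{t^k}{k!}B^k$ with $B\sum_{k\ge 1}\frac{t^k}{k!}B^{k-1}+\Identity$. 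For the functional equation \eqref{eq:fb-functional-eq} I would argue that both sides, viewed as functions of $t$ for fixed $s$, solve the same ODE $\frac{d}{dt}y=By+\exp(Bs)$ with the same value $F_B(s)$ at $t=0$: indeed $\frac{d}{dt}F_B(t+s)=\exp(B(t+s))=\exp(Bs)\exp(Bt)=\exp(Bs)(BF_B(t)+\Identity)$, and likewise for the right-hand side; uniqueness of solutions then forces equality. Iterating \eqref{eq:fb-functional-eq} with $s=1$ gives $F_B(j)=(\Identity+\exp(B)+\cdots+\exp((j-1)B))F_B(1)$, and the geometric-sum identity $(\exp(B)-\Identity)(\Identity+\cdots+\exp((j-1)B))=\exp(jB)-\Identity$ yields the last displayed formula (valid where $\exp(B)-\Identity$ is invertible; otherwise the penultimate form is the statement).

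For part (b), I would take $c(t):=v+F_B(t)(Bv+d)$ as the candidate and check: at $t=0$, $F_B(0)=0$ from \eqref{eq:fb-def}, so $c(0)=v$; and $\dot{c}(t)=\exp(Bt)(Bv+d)$ by \eqref{eq:fb-derivative}, while $Bc(t)+d=Bv+BF_B(t)(Bv+d)+d=(\Identity+BF_B(t))(Bv+d)=\exp(Bt)(Bv+d)$ using \eqref{eq:fb-derivative} again. Hence $c$ solves \eqref{eq:diffeq} with the correct initial value, and by uniqueness for linear systems it is \emph{the} solution. The second equality in \eqref{eq:ct} is then a rewriting: $v+F_B(t)(Bv+d)=v+F_B(t)Bv+F_B(t)d=(\Identity+F_B(t)B)v+F_B(t)d=\exp(Bt)v+F_B(t)d$, again invoking $\Identity+F_B(t)B=\exp(Bt)$ (note $F_B(t)B=BF_B(t)$ since $F_B(t)$ is a power series in $B$).

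There is no serious obstacle here; the only point requiring a little care is the justification of term-by-term differentiation and of the manipulations $\exp(Bs)\exp(Bt)=\exp(B(t+s))$ and $F_B(t)B=BF_B(t)$, all of which follow from absolute and locally uniform convergence of the relevant matrix power series on compact $t$-intervals. I would state this convergence once at the outset (the series for $F_B$ and $\exp(Bt)$ have infinite radius of convergence in the operator norm) and then use it freely. One could alternatively derive \eqref{eq:ct} from the classical variation-of-constants formula $c(t)=\exp(Bt)v+\int_0^t\exp(B(t-\tau))\,d\,d\tau$ and observe that $\int_0^t\exp(B(t-\tau))\,d\tau=\int_0^t\exp(B\sigma)\,d\sigma=F_B(t)$ by \eqref{eq:fb-derivative}, but the direct verification above is shorter and self-contained.
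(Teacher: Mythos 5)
Your proof is correct. For part (a) it is essentially the paper's argument: the term-by-term differentiation giving \eqref{eq:fb-derivative}, and the functional equation \eqref{eq:fb-functional-eq} obtained by observing that $F_B(t+s)$ and $F_B(s)+\exp(Bs)F_B(t)$ agree at $t=0$ and satisfy the same first-order ODE in $t$ --- the paper phrases this as showing that the $t$-derivative of their difference vanishes, which is the same computation. Your added caveat that the closed form $\left(\exp(B)-\Identity\right)^{-1}\left(\exp(jB)-\Identity\right)F_B(1)$ requires $\exp(B)-\Identity$ to be invertible is a point the paper leaves implicit, and is worth having.

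For part (b) you take a genuinely different (though equally valid) route. The paper's primary derivation passes through the identification \eqref{eq:identification} of affine maps with $(n+1)\times(n+1)$ matrices: it rewrites $\dot c = Bc+d$ as a homogeneous linear system for the extended vector $\left(\begin{smallmatrix}c\\1\end{smallmatrix}\right)$ and computes the block exponential $\exp\left(\begin{smallmatrix}B&d\\0&0\end{smallmatrix}\right)t = \left(\begin{smallmatrix}\exp(Bt)&F_B(t)(d)\\0&1\end{smallmatrix}\right)$, from which \eqref{eq:ct} drops out. You instead verify directly that the candidate $c(t)=v+F_B(t)(Bv+d)$ satisfies \eqref{eq:diffeq} and the initial condition, and invoke uniqueness for linear systems; the paper itself flags exactly this as an alternative in its closing sentence (``one could also differentiate Equation \eqref{eq:ct}''). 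Your route is shorter and self-contained; the paper's route buys the structural interpretation that is used throughout the rest of the article, namely that $c$ is the orbit of a one-parameter subgroup of $\aff(n)$ (Remark~\ref{rem:one-parameter-subgroup} and Theorem~\ref{thm:curve-soliton} lean on the block-exponential form \eqref{eq:ct-expB}, not just on the formula \eqref{eq:ct}). No gaps either way.
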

 \begin{proof}
 	(a)
 	Equation~\eqref{eq:fb-derivative} follows immediately
 	from Equation~\eqref{eq:fb-def}.
 	Then we compute
 	\begin{eqnarray*}
 		\frac{d}{dt} \left(F_B(t+s)-
 		\exp(Bs)F_B(t)\right)
 		=\exp(B(t+s))-\exp(Bs)\exp(Bt)=0\,.
 	\end{eqnarray*}
 	Since $F_B(0)=0$ Equation~\eqref{eq:fb-functional-eq}
 	follows. And this implies 
 	Equation~\eqref{eq:fb-functional-eq-j}.
 	
 	\medskip
 	
(b)  We can write the solution of the 
  differential equation~\eqref{eq:diffeq}
  \begin{equation*}
  \frac{d}{dt}
	\label{eq:eq:diffeq1}
   \left(
   \begin{array}{c}
    c(t)\\ \hline 1
   \end{array}
   \right)
   =
   \left(
   \begin{array}{c|c}
    B&d\\ \hline 0&0
   \end{array}
   \right)
   \left(
   \begin{array}{c}
    c(t)\\ \hline 1
   \end{array}
   \right)
   \end{equation*}
   as follows:
    \begin{eqnarray}
    \label{eq:ct-expB}
   \left(
   \begin{array}{c}
    c(t)\\ \hline
    1   
\end{array}
\right)=
\exp \left(
\left(
\begin{array}{c|c}
  B& d\\ \hline
  0&0
\end{array}
\right)
t 
\right)
\left(
\begin{array}{c}
 v \\ \hline 1
 \end{array}
 \right) 
  =\\
  \nonumber
\left(
\begin{array}{c|c}
  \exp(Bt)& F_B(t)(d)\\
  \hline
  0&1
\end{array}
\right)  
\left(
\begin{array}{c}
 v \\ \hline 1
 \end{array}
 \right)
 =
 \left(
 \begin{array}{c}
  \exp(Bt)(v)+F_B(t)(d) \\ \hline 1
  \end{array}
  \right) 
\end{eqnarray}
which is Equation~\eqref{eq:ct}. One could also
differentiate Equation~\eqref{eq:ct}
and use Equation~\eqref{eq:fb-derivative}
 \end{proof}
 \begin{remark}
 	\label{rem:one-parameter-subgroup}
 \rm
   Equation~\eqref{eq:cstilde}
 shows that $c(t)$ is the orbit 
  \begin{equation*}
 t \in \R\longmapsto
 c(t)=\exp \left(
 \left(
 \begin{array}{c|c}
 B& d\\ \hline
 0&0
 \end{array}
 \right)
 t 
 \right)
 \left(
 \begin{array}{c}
 v \\ \hline 1
 \end{array}
 \right) 
 \in \R^n\,.
 \end{equation*} 
 of
 the one-parameter subgroup
 \begin{equation*}
 t \in \R\longmapsto
 \exp \left(
 \left(
 \begin{array}{c|c}
 	B& d\\ \hline
 	0&0
 \end{array}
 \right)
 t 
 \right)\in \affn
 \end{equation*}
 of the affine group
 $\affn$ acting canonically on
 $\R^n.$	
 
 \end{remark}
\section{Polygons invariant under $M_{\alpha}$}
\begin{theorem}
	\label{thm:polygon-soliton}
 Let $(A,b): x \in \R^n \longmapsto
 Ax + b\in \R^n$ be an affine map and $v \in \R^n.$
 Assume that for $\alpha \in (0,1)$ the value
 $1-\alpha$ is not an eigenvalue of $A,$ i.e.
 the matrix $A_{\alpha}:=
 \alpha^{-1}\left(A+(\alpha-1)\Identity\right) $ is invertible.
 Then the following statements hold:
 
 \smallskip
 
 (a) There is a unique polygon $x \in \mathcal{P}(\R^n)$ with
 $x_0=v$ which is a soliton for 
 $M_{\alpha}$ resp. affinely invariant under 
 the mapping $M_{\alpha}$ with respect to the affine
 map $(A,b),$ cf. Equation~\eqref{eq:soliton}.
 If $b_{\alpha}=\alpha^{-1}b,$ then for 
 $j>0:$
 \begin{eqnarray}
 \nonumber
 x_j&=&A_{\alpha}^j(v)+
 A_{\alpha}^{j-1}\left(b_{\alpha}\right)+\ldots+
 A_{\alpha}\left(b_{\alpha}\right)+
 b_{\alpha}\\
 \label{eq:xj}
 &=&v+\left(A_{\alpha}^j-\Identity\right)
 \left( v +
 \left(A_{\alpha}-\Identity\right)^{-1}
 \left(b_{\alpha}\right)
 \right)\,.
 \end{eqnarray}
 and for $j<0:$
 \begin{eqnarray}
 \nonumber
 x_j&=&A_{\alpha}^j(v)-
 A_{\alpha}^{j}\left(b_{\alpha}\right)+\ldots+
 A_{\alpha}^{-1}\left(b_{\alpha}\right)\\
 \label{eq:xj-negative-j}
 &=&v+\left(A_{\alpha}^j-\Identity\right)
 \left( v -
 \left(A_{\alpha}^{-1}-\Identity\right)^{-1}
 \left(
 A_{\alpha}^{-1} (b_{\alpha}
 \right)
 \right)\,.
 \end{eqnarray}

 \smallskip
 
 (b) If $A_{\alpha}=\exp\left(B_{\alpha}\right)$
 for a $(n,n)$-matrix $B_{\alpha}$ and if
 $b_{\alpha}=F_{B_{\alpha}}(1)\left(d_{\alpha}\right)$
 for a vector $d_{\alpha}\in \R^n$ then
 the polygon $x_j$ lies on the smooth
 curve 
 \begin{equation*}
 \label{eq:ctB}
 c(t)=v+
 F_{B_{\alpha}}(t)\left(B_{\alpha}v+d_{\alpha}\right)
 \end{equation*}
 i.e. $x_j=c(j)$ for all $j \in \z.$ 
 \end{theorem}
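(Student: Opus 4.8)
The plan is to reduce the soliton condition~\eqref{eq:soliton} to a first order linear recursion, read off the explicit formulas, and then recognise the resulting polygon as a sample of the curve produced by Proposition~\ref{pro:fb}.

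For part~(a) I would start from the definition $\left(M_{\alpha}(x)\right)_j=(1-\alpha)x_j+\alpha x_{j+1}$, so that~\eqref{eq:soliton} reads $(1-\alpha)x_j+\alpha x_{j+1}=Ax_j+b$. Dividing by $\alpha$ and using $A_{\alpha}=\alpha^{-1}\bigl(A+(\alpha-1)\mathbbm{1}\bigr)$ and $b_{\alpha}=\alpha^{-1}b$, this is equivalent to
\begin{equation*}
x_{j+1}=A_{\alpha}x_j+b_{\alpha}\,,\qquad j\in\z\,.
\end{equation*}
Since $A_{\alpha}$ is invertible by hypothesis, this recursion can be run both forwards and backwards, so prescribing $x_0=v$ determines $x_j$ for every $j\in\z$ uniquely; this is the existence and uniqueness statement.

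To obtain the closed forms I would iterate: for $j>0$ one gets $x_j=A_{\alpha}^{j}v+\sum_{k=0}^{j-1}A_{\alpha}^{k}b_{\alpha}$, and, writing the recursion as $x_{j-1}=A_{\alpha}^{-1}x_j-A_{\alpha}^{-1}b_{\alpha}$, for $j<0$ one gets $x_j=A_{\alpha}^{j}v-\sum_{k=1}^{-j}A_{\alpha}^{-k}b_{\alpha}$. Summing the finite geometric series $\sum_{k=0}^{m-1}A_{\alpha}^{k}=\bigl(A_{\alpha}^{m}-\mathbbm{1}\bigr)\bigl(A_{\alpha}-\mathbbm{1}\bigr)^{-1}$ (and its analogue for powers of $A_{\alpha}^{-1}$, noting that $A_{\alpha}^{-1}$ commutes with $A_{\alpha}-\mathbbm{1}$) and rearranging terms yields Equations~\eqref{eq:xj} and~\eqref{eq:xj-negative-j}; the only thing needing care here is the sign bookkeeping and the extra factor $A_{\alpha}^{-1}$ in the case $j<0$.

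For part~(b) put $c(t)=v+F_{B_{\alpha}}(t)\bigl(B_{\alpha}v+d_{\alpha}\bigr)$. By Proposition~\ref{pro:fb}(b) this solves $\dot c(t)=B_{\alpha}c(t)+d_{\alpha}$ with $c(0)=v$, and in particular $c(t)=\exp(B_{\alpha}t)(v)+F_{B_{\alpha}}(t)(d_{\alpha})$. Using the functional equation~\eqref{eq:fb-functional-eq} with $s=1$ together with $\exp(B_{\alpha}(j+1))=\exp(B_{\alpha})\exp(B_{\alpha}j)$ I would compute
\begin{equation*}
c(j+1)=\exp(B_{\alpha})\bigl(\exp(B_{\alpha}j)(v)+F_{B_{\alpha}}(j)(d_{\alpha})\bigr)+F_{B_{\alpha}}(1)(d_{\alpha})=\exp(B_{\alpha})c(j)+F_{B_{\alpha}}(1)(d_{\alpha})\,.
\end{equation*}
Since by hypothesis $\exp(B_{\alpha})=A_{\alpha}$ and $F_{B_{\alpha}}(1)(d_{\alpha})=b_{\alpha}$, the polygon $y_j:=c(j)$ satisfies $y_{j+1}=A_{\alpha}y_j+b_{\alpha}$ and $y_0=v$, i.e. exactly the recursion and initial condition characterising $(x_j)_{j\in\z}$ in part~(a). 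By the uniqueness proved there, $x_j=c(j)$ for all $j\in\z$.

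The argument is elementary and I do not expect a genuine obstacle; the two points to watch are the index bookkeeping in the geometric sums of part~(a) (especially the negative indices), and the choice, in part~(b), to verify the recursion for $c(j)$ and invoke uniqueness rather than to compare closed-form expressions directly — the latter would also need $A_{\alpha}-\mathbbm{1}$ to be invertible, through the formula for $F_{B_{\alpha}}(j)$ in Proposition~\ref{pro:fb}(a).
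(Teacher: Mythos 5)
Your part~(a) is essentially the paper's own argument: reduce \eqref{eq:soliton} to the recursion $x_{j+1}=A_{\alpha}x_j+b_{\alpha}$, use invertibility of $A_{\alpha}$ to run it in both directions, and sum the geometric series (both versions share the implicit extra requirement that $A_{\alpha}-\Identity$ be invertible for the closed forms \eqref{eq:xj} and \eqref{eq:xj-negative-j}, though not for existence and uniqueness). Part~(b) is where you genuinely diverge. The paper compares closed-form expressions directly: it substitutes $A_{\alpha}^j-\Identity=B_{\alpha}F_{B_{\alpha}}(j)$ into \eqref{eq:xj}, and for $j<0$ needs the identity $F_{B_{\alpha}}(-1)^{-1}=-\exp(B_{\alpha})F_{B_{\alpha}}(1)^{-1}$; in the process it manipulates $\left(B_{\alpha}F_{B_{\alpha}}(1)\right)^{-1}$ and $B_{\alpha}^{-1}$, so it tacitly assumes $B_{\alpha}$ and $F_{B_{\alpha}}(1)$ invertible. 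Your route --- check via the functional equation \eqref{eq:fb-functional-eq} that $y_j:=c(j)$ satisfies the same first-order recursion and initial condition, then invoke the uniqueness already established in part~(a) --- handles positive and negative $j$ in one stroke and needs none of those extra invertibility hypotheses, only that $A_{\alpha}=\exp(B_{\alpha})$ is invertible (which is automatic). It is a correct and in fact slightly more robust argument; what the paper's computation buys in exchange is an explicit verification that the two closed-form expressions agree, without appealing back to uniqueness.
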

\begin{proof}
(a)
By Equation~\eqref{eq:soliton} we have
\begin{equation*}
 (1-\alpha) x_j +\alpha x_{j+1}=A x_j+b
\end{equation*}
for all $j \in \z.$ Hence the polygon is given by $x_0=v$ and
the recursion formulae
\begin{equation*}
x_{j+1}=A_{\alpha}(x_j)+b_{\alpha}\,;\,
x_{j}=A_{\alpha}^{-1}
\left( x_{j+1}-b_{\alpha}\right)\,.
\end{equation*}
for all $j \in \z.$
Then Equation~\eqref{eq:xj} 
and Equation~\eqref{eq:xj-negative-j}
follow.

\medskip

(b)
For $A_{\alpha}=\exp B_{\alpha}; b_{\alpha}=F_{B_{\alpha}}(d_{\alpha})$
we obtain 
from Equation~\eqref{eq:fb-derivative}
for all $j \in \mathbb{Z}:$
$A_{\alpha}-\Identity=B_{\alpha}F_{B_{\alpha}}(1)$
and 
$A_{\alpha}^j-\Identity=B_{\alpha}F_{B_{\alpha}}(j).$
Hence for $j>0:$
\begin{eqnarray*}
x_j
&=& v+
\left(
A_{\alpha}^j-\Identity
\right) 
\left( 
v+
\left( 
A_{\alpha}-\Identity
\right)^{-1} b_{\alpha}
\right)
\\
&=& v+B_{\alpha} F_{B_{\alpha}}(j)
\left( 
v+\left( 
B_{\alpha} F_{B_{\alpha}}(1)
\right)^{-1}
(b_{\alpha})
\right)	\\
&=& v+F_{B_{\alpha}}(j)
\left( 
B_{\alpha} v+d_{\alpha}
\right)=c(j)\,.
\end{eqnarray*}
The Functional 
Equation~\eqref{eq:fb-functional-eq}
for $F_B(t)$ implies
$0=F_B(0)=F_B(-1+1)=F_B(-1)+\exp(-B)F_B(1),$
hence
\ben
F_B(-1)=-\exp(-B)F_B(1)\,;\, F_B(-1)^{-1}=-\exp(B)F_B(1)^{-1}.
\een
Note that the matrices $B, F_B(t), F_B(t)^{-1}$ commute.
With this identity we obtain for $j<0:$ 
\begin{eqnarray*}
	x_j
	&=& v+
	\left(
	A_{\alpha}^j-\Identity
	\right) 
	\left( 
	v-
	\left( 
	A_{\alpha}^{-1}-\Identity
	\right)^{-1}
	\left(  
	A_{\alpha}^{-1} b_{\alpha}
	\right)
	\right)
	\\
	&=& v+B_{\alpha} F_{B_{\alpha}}(j)
	\left( 
	v-\left( 
	B_{\alpha} F_{B_{\alpha}}(-1)
	\right)^{-1}
	\exp(-B_{\alpha})(b_{\alpha})
	\right)	\\
		&=& v+B_{\alpha} F_{B_{\alpha}}(j)
	\left( 
	v- 
	 F_{B_{\alpha}}(-1)^{-1}
	 B_{\alpha}^{-1}
	\exp(-B_{\alpha})
	F_{B_{\alpha}}(1)(d_{\alpha})
	\right)	\\	
	&=& v+F_{B_{\alpha}}(j)
	\left( 
	B_{\alpha} v+d_{\alpha}
	\right)=c(j)\,.
\end{eqnarray*}
\end{proof}
\begin{remark}
(a)
 \rm
 Using the identification 
 Equation~\eqref{eq:identification}
 we can write 
 \begin{equation}
  \left(
  \begin{array}{c}
  x_{j+1} \\ \hline 1
  \end{array}
  \right)
  =
  \left(
  \begin{array}{c|c}
     A_{\alpha}& b_{\alpha}\\
   \hline
   0&1
  \end{array}
  \right)
  \left(
  \begin{array}{c}
   x_j\\ \hline 1
  \end{array}
  \right)
 \,;\,
    \left(
   \begin{array}{c}
   x_{j} \\ \hline 1
   \end{array}
   \right)
   =
   \left(
   \begin{array}{c|c}
    A_{\alpha}& b_{\alpha}\\
    \hline
    0&1
   \end{array}
   \right)^j
   \left(
   \begin{array}{c}
    v\\ \hline 1
   \end{array}
   \right)
  \end{equation}
  for all $j\in \z.$
  
  \smallskip
  
  (b) If $A_{\alpha}=\exp\left(B_{\alpha}\right)$
  for a $(n,n)$-matrix $B_{\alpha}$ and if
  $b_{\alpha}=F_{B_{\alpha}}(1)\left(d_{\alpha}\right)$
  for a vector $d_{\alpha}\in \R^n$ then
  we obtain from
  Equation~\eqref{eq:ct-expB}:
       \begin{eqnarray*}
  	\label{eq:one-parameter}
  	\left(
  	\begin{array}{c}
  		c(t)\\ \hline
  		1   
  	\end{array}
  	\right)=
  	\exp \left(
  	\left(
  	\begin{array}{c|c}
  		B_{\alpha}& d_{\alpha}\\ \hline
  		0&0
  	\end{array}
  	\right)
  	t 
  	\right)
  	\left(
  	\begin{array}{c}
  		v \\ \hline 1
  	\end{array}
  	\right) 
  	=
  	\left(
  	\begin{array}{c|c}
  		\exp(B_{\alpha}t)& F_{B_{\alpha}}(t)(d_{\alpha})\\
  		\hline
  		0&1
  	\end{array}
  	\right)  
  	\left(
  	\begin{array}{c}
  		v \\ \hline 1
  	\end{array}
  	\right)
  	\\
  	=
  	\left(
  	\begin{array}{c}
  		\exp(B_{\alpha}t)(v)+
  		F_{B_{\alpha}}(t)(d_{\alpha}) \\ \hline 1
  	\end{array}
  	\right) 
  	=\left(
  	\begin{array}{c}
  		v+F_{B_{\alpha}}(t)
  		\left(B_{\alpha}v+d_{\alpha}\right) \\ \hline 1
  	\end{array}
  	\right) 
  \end{eqnarray*}
Hence $t \in \R \longmapsto c(t)\in \R^n$
is the orbit of a one-parameter subgroup of 
the affine group applied to the vector $v.$   
\end{remark}
\section{Smooth curves invariant under $M_{\alpha}$}
For a smooth curve $c: \R \longrightarrow \R^n$
and a parameter $\alpha \in (0,1)$
we define the one-parameter family
$\tilde{c}_s:\R\longrightarrow \R^n, s \in \R$ 
by Equation~\eqref{eq:cstilde}.
And we call a smooth curve $c: \R \longrightarrow \R^n$ a
{\em soliton} of the mapping $M_{\alpha}$
(resp. affinely invariant under $M_{\alpha}$)
if there is $\epsilon >0$ and a
smooth map $ \in (-\epsilon,\epsilon)
\longrightarrow (A,b) \in \affn$ such that
\begin{equation}
\label{eq:soliton-curve}
\tilde{c}_s(t)=
(1-\alpha)c(t)+\alpha c(t+s)=
A(s)(c(t))+b(s).
\end{equation}
Then we obtain as an analogue of 
\cite[Thm.1]{Rademacher2017}:
\begin{theorem}
	\label{thm:curve-soliton}
	Let $c:\R \longrightarrow \R^n$ be a
	soliton of the mapping $M_{\alpha}$
	satisfying Equation~\eqref{eq:soliton-curve}.
	Assume in addition that for some $t_0 \in \R$
	the vectors $\dot{c}(t_0),\ddot{c}(t_0),
	\ldots,c^{(n)}(t_0)$ are linearly 
	independent. 
	
	Then the curve $c$ is the unique solution of the
	differential equation
	\begin{equation*}
	\dot{c}(t)=Bc(t)+d
	\end{equation*}
	for $B=\alpha^{-1}A'(0), d=\alpha^{-1}b'(0)$
	with initial condition $v=c(0).$
	
	And $A(s)=(1-\alpha)\Identity +
	\alpha \exp(Bs), b(s)=\alpha F_B(s)(d).$
	
	Hence the curve $c(t)$ is the orbit 
	of a one-parameter subgroup
	\begin{equation*}
	t \in \R\longmapsto
	B(t):=\exp \left(
	\left(
	\begin{array}{c|c}
	B& d\\ \hline
	0&0
	\end{array}
	\right)
	t \right)
=\left(\exp(Bt), F_B(t)(d)\right)\in \affn
	\end{equation*}	
	of the affine group, i.e. 
	\ben
	c(t)=B(t)\left(\begin{array}{c}
		v \\ \hline 1
	\end{array}\right)
	=v+F_B(t)\left(Bv+d\right)\,,
	\een
	cf. Remark~\ref{rem:one-parameter-subgroup}.
\end{theorem}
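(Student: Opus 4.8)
The plan is to differentiate the soliton equation~\eqref{eq:soliton-curve} in $s$ at $s=0$ and read off a differential equation for $c$, then bootstrap regularity of $s\mapsto(A(s),b(s))$ and finally invoke Proposition~\ref{pro:fb}. First I would evaluate~\eqref{eq:soliton-curve} at $s=0$: since $\tilde c_0(t)=c(t)$ we get $A(0)=\Identity$ and $b(0)=0$. Differentiating in $s$ and setting $s=0$ gives $\alpha\,\dot c(t)=A'(0)c(t)+b'(0)$ for all $t$, which is precisely $\dot c(t)=Bc(t)+d$ with $B=\alpha^{-1}A'(0)$ and $d=\alpha^{-1}b'(0)$. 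By the Picard–Lindel\"of theorem this linear ODE has a unique solution with $c(0)=v$, so $c$ is determined; this uses the linear-independence hypothesis on $\dot c(t_0),\ldots,c^{(n)}(t_0)$ only insofar as it guarantees the curve is non-degenerate, but see below.

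The subtler point is that a priori we only know $s\mapsto(A(s),b(s))$ is smooth, and differentiating once does not obviously pin it down. Here the linear-independence assumption does the real work: fixing $t=t_0$ and differentiating~\eqref{eq:soliton-curve} in $t$ repeatedly, we obtain for each $k\ge 1$ the relation $(1-\alpha)c^{(k)}(t_0)+\alpha c^{(k)}(t_0+s)=A(s)c^{(k)}(t_0)$. Evaluating these for $k=1,\dots,n$ and using that $\dot c(t_0),\ddot c(t_0),\dots,c^{(n)}(t_0)$ form a basis of $\R^n$, we can solve for $A(s)$ explicitly as a matrix whose action on that basis is prescribed; in particular $A(s)$ is uniquely determined by $c$, and once $A(s)$ is known the equation at $t=t_0$ determines $b(s)=\tilde c_s(t_0)-A(s)c(t_0)$. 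Then $A(s)$ and $b(s)$ are as smooth as $c$, so all the derivatives above make sense.

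It remains to identify $A(s)$ and $b(s)$ in closed form. Since $c$ solves $\dot c=Bc+d$, Proposition~\ref{pro:fb}(b) gives $c(t)=v+F_B(t)(Bv+d)=\exp(Bt)(v)+F_B(t)(d)$. Substituting into $\tilde c_s(t)=(1-\alpha)c(t)+\alpha c(t+s)$ and using the functional equation~\eqref{eq:fb-functional-eq}, namely $F_B(t+s)=F_B(s)+\exp(Bs)F_B(t)$, together with $\exp(B(t+s))=\exp(Bs)\exp(Bt)$, one computes
\begin{align*}
\tilde c_s(t)&=(1-\alpha)\bigl(\exp(Bt)(v)+F_B(t)(d)\bigr)+\alpha\bigl(\exp(Bs)\exp(Bt)(v)+F_B(s)(d)+\exp(Bs)F_B(t)(d)\bigr)\\
&=\bigl((1-\alpha)\Identity+\alpha\exp(Bs)\bigr)\bigl(\exp(Bt)(v)+F_B(t)(d)\bigr)+\alpha F_B(s)(d)\\
&=\bigl((1-\alpha)\Identity+\alpha\exp(Bs)\bigr)c(t)+\alpha F_B(s)(d).
\end{align*}
Comparing with~\eqref{eq:soliton-curve} yields $A(s)=(1-\alpha)\Identity+\alpha\exp(Bs)$ and $b(s)=\alpha F_B(s)(d)$, which also confirms $A'(0)=\alpha B$ and $b'(0)=\alpha d$ consistently. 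Finally, Remark~\ref{rem:one-parameter-subgroup} (equivalently Proposition~\ref{pro:fb}(b) in the extended-vector form) identifies $c(t)=v+F_B(t)(Bv+d)$ with the orbit of the one-parameter subgroup $t\mapsto\exp\bigl(\left(\begin{smallmatrix}B&d\\0&0\end{smallmatrix}\right)t\bigr)$, completing the proof. The main obstacle is the middle step: extracting smoothness and uniqueness of $(A(s),b(s))$ from the curve, for which the linear-independence of the first $n$ derivatives is exactly what is needed.
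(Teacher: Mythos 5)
Your proof is correct, and for the second half it takes a genuinely different route from the paper's. The first step agrees in substance: the paper also differentiates \eqref{eq:soliton-curve} in $s$ (writing $\partial_s \tilde{c}_s(t)=A'(s)c(t)+b'(s)=\left(A(s)-(1-\alpha)\Identity\right)\dot{c}(t)$ for all small $s$, which at $s=0$ reduces to your identity $\alpha\dot{c}(t)=A'(0)c(t)+b'(0)$), and it uses the linear independence of $\dot{c}(t_0),\dots,c^{(n)}(t_0)$ exactly as you do to pin down $A(0)=\Identity$, $b(0)=0$ and to make affine representations along $c$ unique. Where you diverge is in identifying $A(s)$ and $b(s)$ for all $s$: the paper keeps the relation $\dot{c}(t)=\left(A(s)+(\alpha-1)\Identity\right)^{-1}\left(A'(s)c(t)+b'(s)\right)$ valid for a range of $s$, differentiates it in $s$ and in $t$, uses the independence hypothesis to conclude $\left(\left(A(s)+(\alpha-1)\Identity\right)^{-1}A'(s)\right)'=0$, and then solves the resulting linear ODE $A'(s)=\left(A(s)+(\alpha-1)\Identity\right)B$ with $A(0)=\Identity$. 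You instead solve the ODE for $c$ via Proposition~\ref{pro:fb}, substitute the explicit solution into $(1-\alpha)c(t)+\alpha c(t+s)$, use the functional equation \eqref{eq:fb-functional-eq} to exhibit the candidate pair $\left((1-\alpha)\Identity+\alpha\exp(Bs),\,\alpha F_B(s)(d)\right)$, and conclude by the uniqueness of the affine representation. This is legitimate and arguably more economical, since the verification you perform is exactly the computation the paper carries out separately as Theorem~\ref{thm:soliton-orbits} (no circularity arises, as that computation is independent of the present theorem); the price is that you must make the uniqueness of $(A(s),b(s))$ explicit, which your second paragraph does. Two small points of hygiene: the assertion $A(0)=\Identity$, $b(0)=0$ is not immediate from $\tilde{c}_0=c$ alone but requires differentiating $(A(0)-\Identity)c(t)=-b(0)$ in $t$ and invoking the independence hypothesis (and in fact your argument recovers it anyway at $s=0$ from the closed form of $A(s)$); and the Picard--Lindel\"of uniqueness of the solution of $\dot{c}=Bc+d$ does not use the independence hypothesis at all, contrary to your parenthetical remark.
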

\begin{remark}
	\rm
	\label{rem:dimension-two}

For an affine map $(A,b)\in \gl(n),b\in \R^n$
the linear isomorphism $A$ is called
the \emph{linear part}.
For $n=2$ we discuss the possible normal forms
of $A \in \gl(2)$ resp. the normal forms
of the one-parameter subgroup $\exp(tB)$ and
of the one-parameter family $A(s)=
(1-\alpha)+\Identity+\exp(Bs)$
introduced in  
Theorem~\ref{thm:curve-soliton}.
This will be used in 
Section~\ref{sec:affine-curvature}.
\begin{enumerate}
	\item $A=\begin{pmatrix}
	\lambda & 0\\0& \mu
	\end{pmatrix}$ for $\lambda,\mu\in \R-\{0\},$ i.e.
	$A$ is diagonalizable (over $\R$), then $A$ is called
	\emph{scaling,} for $\lambda=\mu$ it is called
	\emph{homothety}.
	For an endomorphism $B$ which is diagonalizable over
	$\R$ the one-parameter subgroup
	$B(t)=\exp(Bt)$
as well as the one-parameter family
$A(s)=(1-\alpha)\Identity+\alpha \exp(Bs)$ consists of
scalings.
		\item $A=\begin{pmatrix}
	a&-b\\b&a
	\end{pmatrix}$
	for $a,b\in \R, b\not=0,$ i.e. $A$ has no
	real eigenvalues. Then $A$ is called a
	\emph{similarity,} i.e. a composition of a
	\emph{rotation} and a 
	\emph{homothety.} 
	For an endomorphism $B$ with no real eigenvalues 
	 the one-parameter
	subgroup
	$B(t)=\exp(Bt), t\not=0$
	as well as the one-parameter family
	$A(s)=(1-\alpha)\Identity+\alpha \exp(Bs),
	s\not=0$ consist of
	affine mappings without real eigenvalues, i.e. compositions
	of non-trivial rotations and homotheties.
	\item $A=\begin{pmatrix}
	1&1\\0 &1
	\end{pmatrix}$ is called
	\emph{shear transformation.} Hence the matrix
	$A$ has only one eigenvalue $1$ and is not
	diagonalizable.
	If $B$ is of the form
	$B=\begin{pmatrix} 0&1\\0&0
	\end{pmatrix},$ i.e. $B$ is nilpotent, then
	the one-parameter
	subgroup
	$B(t)=\exp(Bt), t\not=0$
	as well as the one-parameter family
	$A(s)=(1-\alpha)\Identity+\alpha \exp(Bs),
	s\not=0$ consist of
	shear transformations.
	\item $A=\begin{pmatrix}
	\lambda &1\\0 &\lambda
	\end{pmatrix}$
	with $\lambda \in \R-\{0,1\}.$
	Then $A$ is invertible with only one eigenvalue 
	$\lambda\not=1$
	and not diagonalizable. This linear map is
	a composition of a homothey and a shear transformation.
	The one-parameter
	subgroup
	$B(t)=\exp(Bt), t\not=0$
	as well as the one-parameter family
	$A(s)=(1-\alpha)\Identity+\alpha \exp(Bs),
	s\not=0$ consist of linear mappings with only
	one eigenvalue different from $1$
	which are not diagonalizable.
	Hence they are compositions of non-trivial
	homotheties and shear transformations, too.
	\end{enumerate}

\end{remark}
We use the following convention: For a one-parameter family
$s \mapsto c_s$ of curves or a one-parameter family $s \mapsto A(s),
s \mapsto b(s)$ of affine maps we denote the differentiation
with respect to the parameter $s$ by $'.$ On the other hand
we use for the differentiation with respect to the curve
parameter $t$ of the curves $t \mapsto c(t), t \mapsto c_s(t)$ 
the notation $\dot{c}, \dot{c_s}.$
\begin{proof}
The proof is similar to the Proof of 
Theorem~\cite[Thm.1]{Rademacher2017}:
Let 
\begin{equation}
\label{eq:cs2}
\cas(t)=\Aas c(t)+\bas=
(1-\alpha)
 c(t)+\alpha c(t+s)
\,.
\end{equation}
For $s=0$ we obtain
$
c(t)=c_{0}(t)=A(0)c(t)+b(0)
$
for all $t \in \R,$ resp. 
$
\left(A(0)-\Identity\right)(c(t))=-b(0)
$
for all $t.$ We conclude that
\be
\label{eq:a02}
\left(A(0)-\Identity\right)\left(c^{(k)}(t)\right)=0
\ee
for all $k\ge 1.$
Since for some $t_0$ the vectors
$\dot{c}(t_0),\ddot{c}(t_0),\ldots,c^{(n)}(t_0)$
are linearly independent by assumption we conclude
from 
Equation~\eqref{eq:a02}:
$A(0)=\Identity,b(0)=0.$
Equation~\eqref{eq:cs2} implies for $k\ge 1:$
\ben
A(s)c^{(k)} (t)=
(1-\alpha)
c^{(k)}(t)+\alpha c^{(k)}(t+s)
\een
and hence
\ben
A'(s)c^{(k)}(t)=\alpha
c^{(k+1)}(t+s)\,.
\een
We conclude from Equation~\eqref{eq:cs2}:
\begin{eqnarray*}
\frac{\partial \cas (t)}{\partial s}&=&A'(s)c(t)+b'(s)\\
&=&
\frac{\partial \cas (t)}{\partial t}-
(1-\alpha) \, \dot{c}(t)=
\left(A(s)-(1-\alpha)\Identity\right)\dot{c}(t)\,.
\end{eqnarray*}
Since $A(0)=\Identity$
the endomorphisms 
$A(s)+(\alpha-1)\Identity$ are isomorphisms for
all $s \in (0,\epsilon)$ for a sufficiently small
$\epsilon>0\,.$ Hence we obtain for 
$s \in (0,\epsilon):$
\begin{equation}
\label{eq:odecs}
\dot{c}(t)=
\left(A(s)+(\alpha-1)\Identity\right)^{-1}A'(s)c(t)
+ 
\left(
A(s)+(\alpha-1)\Identity
\right)^{-1}b'(s)\,.
\end{equation}
Differentiating with respect to $s:$
\ben
\left(
\left(
A(s)+(\alpha-1)\Identity
\right)^{-1}
A'(s)\right)'\left(c(t)\right)
+
\left(
\left(
A(s)+(\alpha-1)\Identity
\right)^{-1}
b'(s)\right)'
=0
\een
and differentiating with respect to $t:$
\ben
\left(
\left(
A(s)+(\alpha-1)\Identity
\right)^{-1}
A'(s)\right)'c^{(k)}(t)=0\,;
\,k=1,2,\ldots,n\,.
\een
By assumption the vectors 
$\dot{c}(t_0),\ddot{c}(t_0),\ldots,c^{(n)}(t_0)$ 
are linearly independent. Therefore we obtain
$\left(
\left(
A(s)+(\alpha-1)\Identity
\right)^{-1}
A'(s)\right)'=0\,.$ Let
$B=\alpha^{-1}A'(0)
, d=\alpha^{-1} b'(0).$ Then we conclude
\begin{equation}
\label{eq:asbs}
A'(s)=
\left(A(s)+(\alpha-1)\Identity\right)
B
\,;\,
b'(s)=
\left(A(s)+(\alpha-1)\Identity\right)
(d)
\,,
\end{equation}
We obtain from Equation~\eqref{eq:odecs}:
\ben
\dot{c}(t)=B c(t)+d\,.
\een
Equation~\eqref{eq:asbs} with $A(0)=\Identity$ implies
$A(s)=(1-\alpha)\Identity +\alpha \exp(Bs).$
And we obtain
$b'(s)=\alpha \exp(Bs)(d)=\alpha F_B'(s)(d).$ Hence 
$b(s)=\alpha F_B(s)(d)$
since $b(0)=0.$ 
\end{proof}
As a consequence we obtain the following
\begin{theorem}
	\label{thm:soliton-orbits}
For a $(n,n)$-matrix $B$ and a vector $d$ any solution of
the inhomogeneous linear differential equation
$\dot{c}(t)=B c(t)+d$ with constant coefficients
is a soliton of the mapping $M_{\alpha}.$ 
These solitons are orbits of a one-parameter subgroup
of the affine group, i.e. they are of the form
given in
Equation~\eqref{eq:ct}.
\end{theorem}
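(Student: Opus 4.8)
The plan is to verify directly that any solution $c(t)$ of $\dot c(t)=Bc(t)+d$ satisfies the soliton equation~\eqref{eq:soliton-curve} with explicitly chosen $A(s)$ and $b(s)$, and then to recognize the resulting curve as the orbit described in Equation~\eqref{eq:ct}. By Proposition~\ref{pro:fb}(b), the solution with initial value $v=c(0)$ is $c(t)=v+F_B(t)(Bv+d)=\exp(Bt)(v)+F_B(t)(d)$, so it suffices to exhibit affine maps $(A(s),b(s))$ for which $(1-\alpha)c(t)+\alpha c(t+s)=A(s)c(t)+b(s)$ holds for all $t$.

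The natural candidates, suggested by Theorem~\ref{thm:curve-soliton}, are $A(s)=(1-\alpha)\Identity+\alpha\exp(Bs)$ and $b(s)=\alpha F_B(s)(d)$. The key computation is to expand $c(t+s)$ using the functional equation~\eqref{eq:fb-functional-eq}, $F_B(t+s)=F_B(s)+\exp(Bs)F_B(t)$, together with $\exp(B(t+s))=\exp(Bs)\exp(Bt)$. From $c(t+s)=\exp(B(t+s))(v)+F_B(t+s)(d)$ one gets
\begin{equation*}
c(t+s)=\exp(Bs)\bigl(\exp(Bt)(v)+F_B(t)(d)\bigr)+F_B(s)(d)=\exp(Bs)(c(t))+F_B(s)(d).
\end{equation*}
Hence $(1-\alpha)c(t)+\alpha c(t+s)=\bigl((1-\alpha)\Identity+\alpha\exp(Bs)\bigr)(c(t))+\alpha F_B(s)(d)=A(s)c(t)+b(s)$, which is precisely~\eqref{eq:soliton-curve}. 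The map $s\mapsto(A(s),b(s))$ is smooth (indeed real-analytic) on all of $\R$, so $c$ is a soliton of $M_{\alpha}$; and $c$ is an orbit of the one-parameter subgroup $t\mapsto\exp\!\left(\left(\begin{array}{c|c}B&d\\\hline 0&0\end{array}\right)t\right)$ by Remark~\ref{rem:one-parameter-subgroup} and Equation~\eqref{eq:ct-expB}, giving the last assertion.

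I expect no real obstacle here: the statement is essentially a converse to Theorem~\ref{thm:curve-soliton} and follows by a direct verification rather than a structural argument. The only point worth noting is that, unlike Theorem~\ref{thm:curve-soliton}, no nondegeneracy hypothesis on $\dot c(t_0),\ddot c(t_0),\dots,c^{(n)}(t_0)$ is needed, since here we are not trying to \emph{recover} $B$ and $d$ from the soliton data but are given them; the verification goes through for arbitrary $B$ and $d$. One should also remark that $A(s)$ need not be invertible for every $s$ (for instance $1-\alpha$ could be an eigenvalue of $\exp(Bs)$ for isolated $s$), but since Equation~\eqref{eq:soliton-curve} only requires an affine relation and invertibility is automatic for small $|s|$ because $A(0)=\Identity$, this causes no trouble for the soliton property as defined.
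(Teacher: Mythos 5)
Your proposal is correct and follows essentially the same route as the paper: both take the explicit solution from Proposition~\ref{pro:fb}(b), choose $A(s)=(1-\alpha)\Identity+\alpha\exp(Bs)$ and $b(s)=\alpha F_B(s)(d)$, and verify Equation~\eqref{eq:soliton-curve} via the functional equation~\eqref{eq:fb-functional-eq}. Your intermediate identity $c(t+s)=\exp(Bs)(c(t))+F_B(s)(d)$ just packages the paper's chain of equalities a bit more cleanly; the added remarks on smoothness and on invertibility of $A(s)$ are sensible but not needed.
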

\begin{proof}
Any solution of the equation $\dot{c}(t)=B c(t)+d$
has the form
\ben
c(t)=v+F_B(t)(Bv+d)
\een
with $v=c(0),$ 
cf. Proposition~\ref{pro:fb}. Then
with $A(s)=(1-\alpha)\Identity +\alpha \exp(Bs)$
and $b(s)=\alpha F_B(s)(d)$
we conclude from
Equation~\eqref{eq:fb-derivative}
and Equation~\eqref{eq:fb-functional-eq}:
\begin{eqnarray*}
\tilde{c}_s(t)&= &(1-\alpha)c(t)+\alpha c(t+s)\\
&=&v+(1-\alpha)F_B(t)(Bv+d)
+\alpha F_B(t+s)(Bv+d)\\
&=& v+(1-\alpha)(c(t)-v)+
\alpha \left(F_B(s)+ \exp(Bs) F_B(t)\right)
(Bv+d)\\
&=&(1-\alpha) c(t)+\alpha \exp (Bs)(v)+
\alpha F_B(s)(d) + \alpha\exp (Bs)(c(t)-v)\\
&=&
\left(\left(1-\alpha\right)\Identity +\alpha\exp(Bs)\right)(c(t))
+\alpha F_B(s)(d)
\\
&=&
A(s)c(t)+b(s)\,.
\end{eqnarray*}
Hence $c$ is a soliton of the mapping 
$M_{\alpha},$ cf.
Equation~\eqref{eq:soliton-curve}.
\end{proof}

The curves $c$ invariant under the
process $T$ considered in
\cite{Rademacher2017} define a 
class of curves containing the orbits of
the one-parameter subgroups of the affine group. 
They are solutions of the second
order differential equation $\ddot{c}=Bc +d$
which on the other hand can be reduced to a system
of first order differential equations,
cf.~\cite[Rem.1]{Rademacher2017}.
\section{Curves with constant affine curvature}
\label{sec:affine-curvature}
The orbits of one-parameter subgroups of the affine
group $\aff(2)$ acting on $\R^2$ can also be characterized as curves
of constant 
general-affine curvature
parametrized proportional to
general-affine arc length
unless they are parametrizations of a parabola,
an ellipse or a hyperbola.
This will be discussed
in this section. The one-parameter subgroups are determined
by an endomorphism $B$ and a vector $d.$ We describe
in Proposition~\ref{pro:affine-dim2}
how the general-affine curvature can
be expressed in terms of the
matrix $B.$

For certain subgroups of the affine group $\aff(2)$
one can introduce a corresponding \emph{curvature} and 
\emph{arc length.} One should  be aware that sometimes
in the literature the curvature related to the 
equi-affine subgroup $S\aff(2)$ generated by 
the special linear group ${\rm SL}(2)$
of
linear maps of determinant one and the translations
is also called {\em affine curvature.}
We distinguish in the following between the
\emph{equi-affine curvature} 
$\kea$ and the \emph{general-affine curvature} $\ka$
as well as between the \emph{equi-affine
length parameter} $\sea$ and the 
\emph{general-affine length parameter} $\sa.$

We recall the definition of the
equi-affine and general-affine curvature of a 
smooth plane curve $c:I \longrightarrow \R^2$ 
with $\det(\dot{c}(t)  \, \ddot{c}(t))=
|\dot{c}(t) \, \ddot{c}(t)|\not=0$ for all $t\in I.$

By eventually changing the orientation of the curve
we can assume
$|\dot{c}(t) \, \ddot{c}(t)|>0$ for all $t\in I.$
A reference is the book
by P. and A.Schirokow~\cite[\S10]{Schirokov1962}
or the recent article
by Kobayashi and Sasaki~\cite{KS2019}.
Then 
$\sea(t):=
\int \left|\dot{c}(t) \enspace 
\ddot{c}(t)\right|^{1/3}\,dt$
is called {\em equi-affine arc length.}
We denote by $t=t(\sea)$ the inverse function,
then $\tilde{c}(\sea)=c(t(\sea))$
is the parametrization by 
{\em equi-affine arc length.}
Then $\tilde{c}'''(\sea), \tilde{c}'(\sea)$ are linearly
dependent and the {\em equi-affine curvature}
$\kea(s)$ is defined by
\begin{equation*}
\tilde{c}'''(\sea)=-\kea(\sea)\, \tilde{c}'(\sea)
\end{equation*}
resp.
\begin{equation*}
\kea(s)=\left|\tilde{c}''(\sea)\enspace 
\tilde{c}'''(\sea)\right|\,.
\end{equation*}

Assume that $c=c(\sea), \sea\in I$ is a 
smooth curve parametrized
by equi-affine arc length for which the sign
$ \epsilon=\sign( \kea(s))\in  \{0, \pm 1\}$
 of the 
equi-affine curvature is constant. 
If $\epsilon=0$ then the curve is 
up to an affine transformation
a parabola $(t,t^2).$
Now assume $\epsilon \not=0$ and let
$\Kea=|\kea|=\epsilon \kea.$
Then the {\em general-affine arc length}
$\sa=\sa(\sea)$ is defined by 
\begin{equation}
\sa=\int \sqrt{\Kea(\sea)}\,d\sea\,.
\end{equation} 
We call a curve 
$c=c(t)$ {\em parametrized proportional to 
general-affine arc length}
if $t=\lambda_1\sa +\lambda_2$ for 
$\lambda_1,\lambda_2 \in \R$
with $\lambda_1\not=0.$
The {\em general-affine curvature}
$\ka=\ka(s)$ is defined by
\begin{equation}
\label{eq:curv-affine}
\ka(s)=\Kea '(s)
 \Kea (s)^{-3/2}
=
-2 \left(
\Kea^{-1/2} (s)
\right)'\,.
\end{equation}
If the 
general-affine curvature $\ka$ (up to sign) and the 
sign $\epsilon$ is given
with respect to the equi-affine arc length
parametrization, then the equi\--affine curvature
$\kea=\kea(\sea)$ is determined 
up to a constant by
Equation~\eqref{eq:curv-affine}.
Hence the curve is determined up to an affine
transformation. 
The invariant $\ka$ already occurs in
Blaschke's book~
\cite[\S 10, p.24]{Blaschke1923}.
Curves of constant general-affine curvature are orbits of
a one-parameter subgroup of the affine group.
These curves already were discussed
by Klein and Lie~\cite{Klein1871} under the
name \emph{$W$-curves.} 
\begin{proposition}
\label{pro:affine-dim2}
For a non-zero matrix  $B \in M_{\R}(2,2)$ and vectors
$d, v\in \R^2$ where $Bv+d$ is not an eigenvector
of $B$
let 
$c:\R \longrightarrow \R^2$ be the solution of the 
differential equation $c'(t)=B c(t)+d; c(0)=v,$
i.e.
$c(t)=v+F_B(t)(Bv+d)=\exp(tB)(v)+F_B(t)(d).$
We assume that 
$\beta=\left|c'(0)\enspace c''(0)\right|^{1/3}=
\left|Bv+d \enspace B(Bv+d)\right|^{1/3}
>0.$
Define 
\begin{equation}
k=k(B)=-2+9\det(B)/\trace^2(B)
\enspace;\enspace
K=K(B)=|k(B)|^{-1/2}.
\end{equation}

\medskip

(a) If $\trace(B)=0$
then the curve is parametrized proportional
to equi-affine arc length and
the equi-affine curvature is constant
$\kea=\det(B)/\beta^2$ and $
\epsilon=\sign(\det(B)),$ 
the curve is a parabola, if $\epsilon=0,$
an ellipse, if $\epsilon >0,$
or a hyperbola, if $\epsilon<0,$
cf. Remark~\ref{rem:keaconst}.

\medskip

(b) If $\trace(B)\not=0$ then
we can choose a 
parametrization 
by equi-affine arc length $\sea$
such that the 
equi-affine curvature
$\kea$
is given
 by: 
\begin{equation}
\label{eq:eq-affine-curv}
 \kea(\sea)=
 k(B){\sea}^{-2}.
\end{equation}
If $k(B)=0$ the curve has vanishing equi-affine
curvature and is a parametrization of a parabola,
cf. the Remark~\ref{rem:keaconst}.
 If $k(B)\not=0$ then the 
 general-affine curvature
 is defined and constant:
 \begin{equation}
 \label{eq:ka}
 \ka(\sea)=-2 K(B).
 \end{equation}
 Up to an additive constant the
 general-affine arc length
 parameter $\sa$ is given by:
 $$
 \sa=\frac{\trace B}{3 K(B)}\,t\,.
 $$
 Hence the curve $c(t)$ is
 parametrized proportional
 to general-affine arc length.
 \end{proposition}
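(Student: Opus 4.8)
The plan is to reduce everything to two facts about the curve $c(t)=v+F_B(t)(Bv+d)$, which solves $c'(t)=Bc(t)+d$. First, by Proposition~\ref{pro:fb}(a) one has $c'(t)=\exp(tB)\,w$ with $w:=Bv+d$, and differentiating $c'=Bc+d$ repeatedly gives $c^{(j)}(t)=B^{j-1}c'(t)$ for all $j\ge 1$; second, the Cayley--Hamilton identity $B^{2}=\trace(B)\,B-\det(B)\,\Identity$ turns this into $c'''(t)=\trace(B)\,c''(t)-\det(B)\,c'(t)$. Combined with $\det\bigl(\exp(tB)\bigr)=\exp\bigl(t\,\trace(B)\bigr)$ these give $\left|c'(t)\,c''(t)\right|=\exp\bigl(t\,\trace(B)\bigr)\,\beta^{3}$. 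The hypothesis that $Bv+d$ is not an eigenvector of $B$ makes $w$ and $Bw$ linearly independent, so after possibly reversing orientation $\det(w,Bw)=\beta^{3}>0$, hence $\left|c'(t)\,c''(t)\right|>0$ for all $t$ and the equi-affine frame is defined on all of $\R$.

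For part (a), when $\trace(B)=0$ the determinant $\left|c'(t)\,c''(t)\right|=\beta^{3}$ is constant, so $\sea=\beta t$ up to an additive constant and $c$ is parametrized proportional to equi-affine arc length. Now Cayley--Hamilton reads $c'''(t)=-\det(B)\,c'(t)$, and for the equi-affine parametrization $\tilde c(\sea)=c(\sea/\beta)$ --- whose $k$-th $\sea$-derivative is $\beta^{-k}$ times the $k$-th $t$-derivative of $c$ --- this becomes $\tilde c'''=-(\det(B)/\beta^{2})\,\tilde c'$. Hence $\kea=\det(B)/\beta^{2}$ is constant and $\epsilon=\sign\det(B)$ because $\beta^{2}>0$. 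The trichotomy parabola / ellipse / hyperbola according to $\epsilon=0$, $\epsilon>0$, $\epsilon<0$ is precisely the content of Remark~\ref{rem:keaconst}.

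For part (b), write $\tau:=\trace(B)\neq 0$. I would integrate $d\sea/dt=\left|c'(t)\,c''(t)\right|^{1/3}=\beta\exp(\tau t/3)$ and, choosing the integration constant to be zero, obtain $\sea=\tfrac{3\beta}{\tau}\exp(\tau t/3)$, so $u:=dt/d\sea=3/(\tau\sea)$, a function of constant sign on its domain. Expanding $\tilde c'$, $\tilde c''$, $\tilde c'''$ by the chain rule in terms of $u$, $u'$, $u''$ and $c'$, $c''$, $c'''$, and then substituting $c'''=\tau c''-\det(B)c'$, the coefficient of $c''$ in $\tilde c'''$ is $\tau u^{3}+3uu'$, which vanishes since $3u'+\tau u^{2}=0$; this both verifies the equi-affine normalization and leaves $\tilde c'''=\bigl(u''/u-\det(B)\,u^{2}\bigr)\tilde c'$. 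With $u=3/(\tau\sea)$ one finds $u''/u=2\sea^{-2}$ and $\det(B)\,u^{2}=9\det(B)\,\tau^{-2}\sea^{-2}$, whence $\kea(\sea)=\det(B)\,u^{2}-u''/u=\bigl(-2+9\det(B)/\tau^{2}\bigr)\sea^{-2}=k(B)\,\sea^{-2}$, which is Equation~\eqref{eq:eq-affine-curv}.

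It then remains to read off the general-affine data. If $k(B)=0$ then $\kea\equiv 0$ and Remark~\ref{rem:keaconst} again gives a parabola. If $k(B)\neq 0$ then $\Kea=|k(B)|\,\sea^{-2}$, so $\Kea^{-1/2}=K(B)\,|\sea|$ has constant derivative $\pm K(B)$; hence $\kga=-2\bigl(\Kea^{-1/2}\bigr)'=\mp 2K(B)$ is constant --- and $\kga$ is in any case defined only up to sign --- which is Equation~\eqref{eq:ka}. Finally, since $\sqrt{\Kea}\,d\sea=\pm\sqrt{|k(B)|}\;d\ln|\sea|$ and $\ln|\sea|=\tfrac{\tau}{3}\,t+\mathrm{const}$ while $\sqrt{|k(B)|}=1/K(B)$, integration gives $\sa=\tfrac{\tau}{3K(B)}\,t$ up to an additive constant, so $t$ is an affine function of $\sa$ and $c$ is parametrized proportional to general-affine arc length. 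I expect no conceptual obstacle: the only delicate points are the third-order chain-rule computation and keeping track of the freely chosen integration constants and the signs entering $\sa$, so the "hard part" is careful bookkeeping rather than a genuine difficulty.
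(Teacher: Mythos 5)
Your proof is correct and follows essentially the same route as the paper: both compute $|\dot c(t)\ \ddot c(t)|=\beta^{3}e^{t\,\trace B}$, integrate to get $\sea(t)$, and use Cayley--Hamilton to show $\tilde c'''$ is a multiple of $\tilde c'$; the only difference is that you organize the third-derivative computation as a scalar chain rule applied to $c'''=\trace(B)\,c''-\det(B)\,c'$, whereas the paper factors the operator $\left(\tfrac{3}{\tau}B-\Identity\right)\left(\tfrac{3}{\tau}B-2\Identity\right)$ acting on $\tilde c'$. Your value $\kga=-2K(B)$ (with the sign caveat you note) agrees with the proposition as stated, and is in fact more careful than the paper's own proof, whose final display misprints this as $-2/K(B)$.
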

\begin{remark}
\label{rem:keaconst}
\rm
It is well-known that
the curves of constant equi-affine curvature
are parabola, hyperbola or ellipses,
cf.\cite[\S 7]{Blaschke1923}.
For $\kea=0$ we obtain a parabola:
$c(t)=c(0)+c'(0)\sea+c''(0)\sea^2/2,$
for $\kea>0$ the ellipse
$c(\sea)=\left(a \cos(\sqrt{\kea}\sea),
b \sin(\sqrt{\kea}\sea)\right)$
with $\kea=(ab)^{-2/3}$ and for
$\kea<0$ the hyperbola
$c(\sea)=\left(a \cosh(\sqrt{-\kea}\sea),
b \sinh(\sqrt{-\kea}\sea)\right)$
with $\kea=-(ab)^{-2/3}.$
Here $a,b>0.$

\end{remark}

\begin{proof}
Following Proposition~\ref{pro:fb} 
we obtain as solution of the 
differential equation:
$c(t)=v+F_B(t)(Bv+d),$ 
hence for the derivatives:
$c^{(k)}(t)=B^{k-1} \exp(tB)(Bv+d).$
Then :
\begin{eqnarray*}
 \left| \dot{c}(t) \, 
 \ddot{c}(t)\right|&=&\left|\exp(Bt)\right|
 \left|bv+d \enspace B(Bv+d)\right|
 \\
 &=&
 \exp\left(\trace(B)t\right)
 \left|Bv+d \enspace B(Bv+d)\right|.
\end{eqnarray*}
Let $\beta=
\left(\left| 
Bv+d \enspace B(Bv+d)
\right|\right)^{1/3}$
and $\tau=\trace(B).$ Then
\ben
|\dot{c}(t) \, \ddot{c}(t)|=\beta^3\exp(\tau t).
\een

\medskip

(a) If $\tau=0$ then $\sea=t\beta,$
i.e. the curve is parametrized proportional
to equi-affine arc length
and 
\ben
\tilde{c}(\sea)=c(t(\sea))=c(\sea/\beta)=
v+F_B(\sea/\beta)(Bv+d).
\een
Then
\begin{eqnarray*}
\tilde{c}'(\sea)&=& 
\beta^{-1}\exp(B\sea/\beta)(Bv+d)\\
\tilde{c}'''(\sea)&=&
\beta^{-3}B^2\exp(B\sea/\beta)(Bv+d)
=-\det(B)\beta^{-2}\tilde{c}'(\sea)\,.
\end{eqnarray*}
Here we use that by Cayley-Hamilton
$B^2-\tau B=B^2=-\det (B) \cdot \Identity.$ Hence we obtain
$\kea(s)=\det(B)/\beta^2$ and 
$\epsilon = \sign (\det(B)).$
Then the claim follows from
Remark~\ref{rem:keaconst}.

\medskip

(b) Assume $\tau\not=0.$
Then the equi-affine arc length $\sea=\sea(t)$ is given by
\be
\label{eq:seatau}
\sea(t)=\beta
\int\exp(\tau t/3)\,dt= 
\frac{3\beta}{\tau}\exp(\tau t/3)\,.
\ee
Hence the equi-affine arc length parametrization
of $c$ is given by
\begin{equation*}
\tilde{c}(\sea)=v
+F_B\left(\frac{3}{\tau}
\ln
\left(
\frac{\tau}{3\beta}\sea 
\right)
\right)(Bv+d).
\end{equation*}
Then we can express the derivatives:
\begin{eqnarray*}
\tilde{c}'(\sea)&=&
\frac{3}{\tau}\, \frac{1}{\sea}\exp\left(\frac{3}{\tau}
B\ln\left(\frac{\tau}{3\beta}\sea\right)
\right)(Bv+d)\\
\tilde{c}''(\sea)&=&
\left(\frac{3}{\tau}B-\Identity\right)
\frac{1}{\sea}\tilde{c}'(\sea)\\
\tilde{c}'''(\sea)&=&
\left(\frac{3}{\tau}B-\Identity\right)
\left(\frac{3}{\tau}B-2\Identity\right)
\frac{1}{\sea^2}\tilde{c}'(\sea)\\
&=&-\left(\frac{9\det(B)}{\tau^2}-2
\right)\frac{1}{\sea^2}\tilde{c}'(\sea)\,.
\end{eqnarray*}
Here we used that by Cayley-Hamilton
$B^2-\tau B=-\det B\cdot\Identity.$
Hence we obtain for the equi-affine
curvature
\begin{equation}
\label{eq:eqaffine-curv}
\kea(\sea)=\frac{k(B)}{\sea^2}.
\end{equation}
Then $\epsilon=\sign \, k(B)$ and
for $k(B)\not=0$ we obtain
from Equation~\eqref{eq:curv-affine}
and Equation~\eqref{eq:eqaffine-curv}:
\begin{equation*}
 \ka(\sea)=-
\frac{2 }{K(B)}
 \end{equation*}
And for the general-affine arc length
we obtain
\ben
\sa=
\ln (|\sea|)/K(B)
\,,
\een
resp. up to an
additive constant:
\ben
\sa=
\frac{\tau t}{3 K(B)}
\een
using Equation~\eqref{eq:seatau}.

The parametrization by general-affine arc length
is given by 
\ben
\label{eq:parametrization-affine}
c^*(\sa)=v+
F_B\left(3 K(B) \sa/\tau
\right)(Bv+d)
\,.
\een
\end{proof}

\begin{example}
	\label{exa:n=2}
\rm

	Depending on the real Jordan normal forms of the endomorphism
	$B$ we investigate the solitons $c(t),$ their
	special and general affine curvature. The normal forms
	of the corresponding one-parameter subgrpoup 
	$B(t)=\exp(Bt)$ as well
	of the one-parameter family $A(s)=(1-\alpha)\Identity+\exp(Bs)$
	follow from Remark~\ref{rem:dimension-two}.
	Since $c(\mu t)=\exp(\mu B t)$ the multiplication
	of $B$ with a non-zero real $\mu$ corresponds to
	a linear reparametrization of the curve. 
	If $B$ has a non-zero real
	eigenvalue we can assume without loss of generality
	that it is $1$
	and in the case of a non-real eigenvalue we can assume that
	it has modulus $1.$
		\begin{itemize}
\item[(a)]
Let $B=
\left(
\begin{array}{cc}
1 & 0\\
0 & \lambda
\end{array}
\right), d=(0,0), c(0)=(1,1)$
and $\lambda\not=0,1.$
Then $\beta=(\lambda(1-\lambda))^{1/3}\not=0,
\trace B=1+\lambda$
and $c(t)=\left(\exp( t), 
\exp(\lambda t)\right).$
Up to parametrization we have
$c(u)=\left(u, u^{\lambda}\right).$

If $\lambda=-1$ then $c$ is a
parametrization of a hyperbola,
$\trace B=0$ and $\kea=- 2^{-2/3},$
cf. Remark~\ref{rem:keaconst}.

If $\lambda\not=-1$ 
we obtain for the equi-affine curvature
with respect to a equi-affine parametrization
$\sea$
from Equation~\eqref{eq:eq-affine-curv}:
\begin{eqnarray*}
\kea(\sea)=\left(9\frac{\det B}{\trace^2B}-2\right)\frac{1}{\sea^2}=
-\frac{(\lambda-2)( 2\lambda-1)}{
(\lambda+1)^2}\frac{1}{\sea^2}\,.
\end{eqnarray*}
For $\lambda=1/2,2$ we obtain
a parametrization of a parabola
with vanishing equi-affine curvature,
cf. Remark~\ref{rem:keaconst}.
Now we assume $\lambda\not=1/2,2.$
Hence $\epsilon=1$ if and only if $1/2<\lambda <2.$
The affine curvature $\ka$ is constant:
\begin{equation*}
\ka=-
2\,\frac{
|\lambda +1|}{
\sqrt{|(\lambda-2)(2\lambda-1)|}
}\,,
\end{equation*}
cf. \cite[Ex.2.14]{KS2019}.
We have $\epsilon=1$ if and only if
$1/2<\lambda<2,$ then $\kga \in (-\infty,-4).$
And $\epsilon=-1$ if and only if $\lambda<1/2, \lambda\not=0$ or
$\lambda>2,$ then
$\kga\in (-\infty,-\sqrt{2})\cup (-\sqrt{2},0).$

Hence in this case the corresponding
one-parameter subgroup 
$$B(t)=
\begin{pmatrix}
\exp(t)&0\\
0&\exp(\lambda t)
\end{pmatrix}$$ as well as
the one-parameter family 
$$A(s)=\begin{pmatrix}
1-\alpha+\alpha \exp(s)&0\\
0&1-\alpha + \alpha \exp(\lambda s)
\end{pmatrix}$$
 consist of 
scalings.
\item[(b)]
$B=
\left(
\begin{array}{cc}
0 & 0\\
0 & 1
\end{array}
\right)$
and $d=(1,0).$
Then the solution of the Equation
$\dot{c}(t)=B c(t)+d$ with $c(0)=(0,1)$
is of the form
$c(t)=(t,\exp(t)).$
Then we obtain $\epsilon=-1$
and 
$\ka=-\sqrt{2}.$
The corresponding
one-parameter subgroup $B(t)$ as well as
the one-parameter family $A(s)$ consist of 
scalings, the affine transformation
$(A(s),b(s)$ is given by
$(A(s),b(s))=
\left(
\begin{pmatrix} 1&0\\0&1-\alpha+\alpha \exp(s)
\end{pmatrix}, 
\alpha
\begin{pmatrix} s\\0
\end{pmatrix}
\right),$
i.e. a composition of scalings and
translations.
\begin{figure}
	\includegraphics[width=0.6\textwidth]{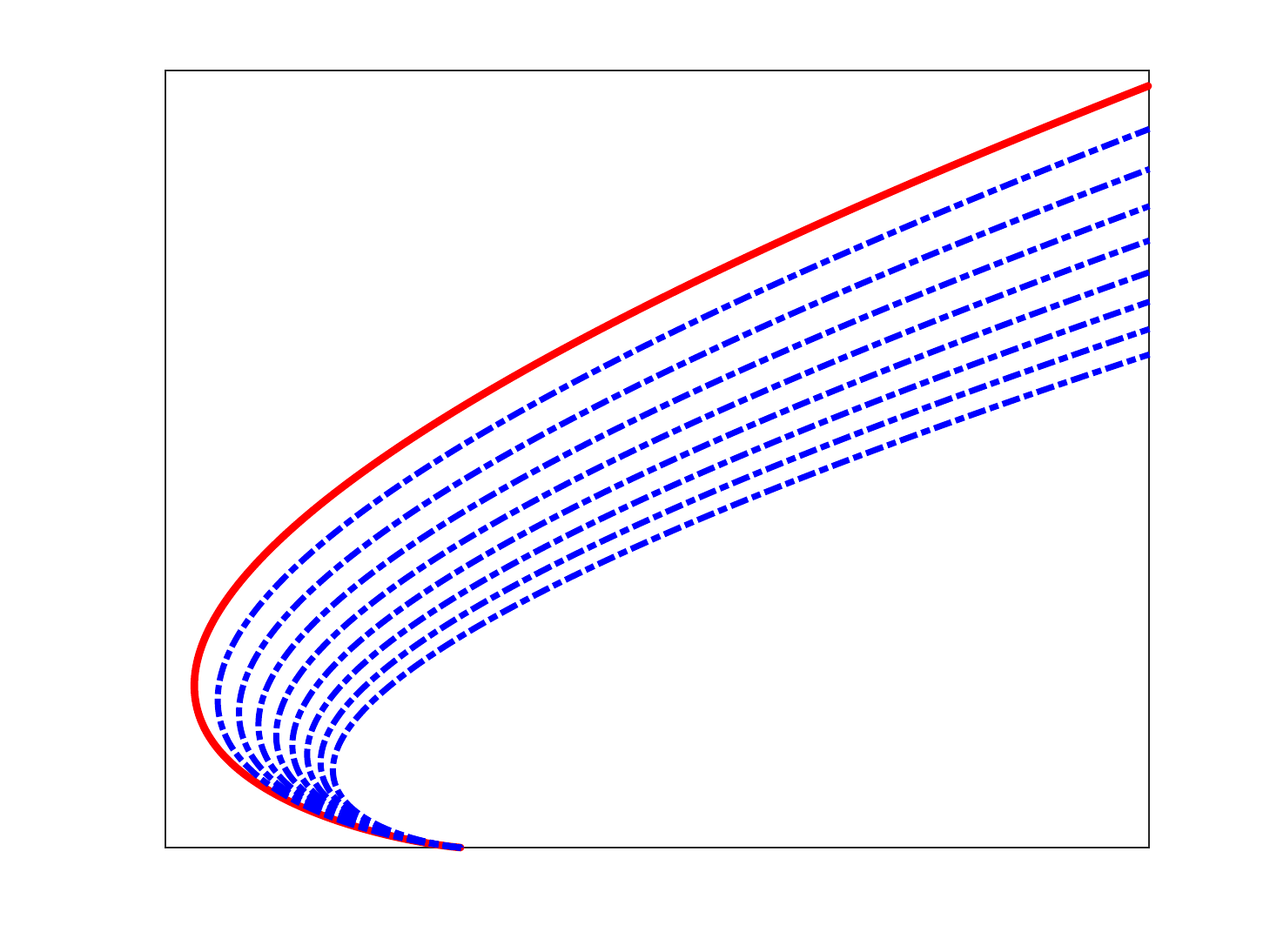} 
	\caption{The soliton $c(t)=( (t+1)\exp(t), \exp(t))$} with the family $c_s(t)=A(s)c(t).$
	\label{fig:two}
\end{figure}

\item[(c)]
If $B=
\left(
\begin{array}{cc}
1 & 1\\
0 & 1
\end{array}
\right), d=(0,0), c(0)=(1,1)$ then $c(t)=\left((t+1)\exp( t), 
\exp( t)\right)$
i.e. up to an affine transformation
and a reparametrization
the curve is 
of the form
$c(u)=(u, u\ln(u)).$
Then $\epsilon=1$ and
$\ka=-4.$
The corresponding
one-parameter subgroup $B(t)$ as well as
the one-parameter family $A(s)$ consist of 
compositions of a homothety and a shear transformations;
$$
B(t)=\exp(t) \cdot
\begin{pmatrix}
1&t\\0&1
\end{pmatrix}\,;\,
A(s)=
\begin{pmatrix}
1-\alpha+\alpha \exp(s)&\alpha s \exp(s)\\
0&1-\alpha+\alpha\exp(s)
\end{pmatrix},
$$ cf. Figure~\ref{fig:two}.
\item[(d)]
If $B=
\left(
\begin{array}{rr}
a & -b\\
b & a
\end{array}
\right)$ 
with $b\not=0, a^2+b^2=1, d=0, c(0)=(1,0)$
then 
$c(t)=\exp(at)\left(\cos(b t),
\sin(b t)\right)$.
For $a=0,$ this is a circle with
$\kea=1.$ Now we assume
$a\not=0:$
Then $\epsilon=\sign (9 \det(B)-2 \trace^2(B))=    
\sign(a^2+9b^2)=1$ and
we obtain for the general-affine
curvature
$\ka=-4 |a|
/\sqrt{a^2+9b^2}=-4|a|/\sqrt{9-8a^2}\,,$
i.e. $\ka \in (-4,0).$
The corresponding
one-parameter subgroup $B(t)$ as well as
the one-parameter family $A(s)$ consist
of similarities. 
\item[(e)] If $B=
\left(
\begin{array}{cc}
0 & 1\\
0 & 0
\end{array}
\right)$
one can choose $c(0)=(0,0),d=(0,1)$
and obtain $c(t)=(t^2/2,t),$ i.e. a
parabola. In this case the 
one parameter subgroup 
$B(t)=\exp(tB)$ consists
of shear transformations. 
The one-parameter family
$(A(s),b(s))=
\left(
\begin{pmatrix} 1&\alpha s\\0&1
\end{pmatrix}, 
\alpha
\begin{pmatrix} s^2/2\\s
\end{pmatrix}
\right)$
consists of a composition of a shear transformation
and a translation,
cf. Figure~\ref{fig:one}.
For the \emph{affine curve shortening flow}
the parabola is a \emph{translational soliton}.
Therefore it is also called 
the \emph{affine analogue of the grim reaper,}
	cf. \cite[p.192]{Calabi1996}.
	For the curve shortening process
	$T$ defined by Equation~\eqref{eq:rademacher1}
	the parabola is also a translational soliton,
	cf. \cite[Sec.5, Case (5)]{Rademacher2017}.

\end{itemize}
\end{example}
Note that the parabola occurs twice, 
in Case (a) it occurs with the parametrization
$c(t)=(\exp(t),\exp(2t)),$ 
in Case (e) it occurs with a parametrization
proportional to equi-affine arc length.
Summarizing we obtain from 
Theorem~\ref{thm:curve-soliton} and
Theorem~\ref{thm:soliton-orbits} together with
Proposition~\ref{pro:affine-dim2}
resp. Example~\ref{exa:n=2}
the following
\begin{theorem}
	Let $c:\R\longrightarrow
	\R^2$ be a smooth curve for which
	$\dot{c}(0),\ddot{c}(0)$ are linearly independent.
	Then $c$ is a soliton
of the mappings
$M_{\alpha},\alpha\in (0,1),$ in particular
of the midpoints mapping $M=M_{1/2},$
if it is a curve of constant
equi-affine curvature  
parametrized proportional to
equi-affine arc length, 
or a parabola with the parametrization
$c(t)=(\exp(t),\exp(2t))$
up to an affine transformation,
or if it is a
curve of constant general-affine curvature 
parametrized proportional to general-affine
arc length.
\end{theorem}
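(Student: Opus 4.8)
The plan is to assemble the statement from pieces already in hand, so most of the work is bookkeeping rather than new computation. The hypothesis is that $\dot c(0),\ddot c(0)$ are linearly independent in $\R^2$; that is exactly the nondegeneracy condition $|\dot c(0)\;\ddot c(0)|\neq 0$ needed to talk about equi-affine arc length, and it is also the $n=2$ instance of the linear-independence hypothesis in Theorem~\ref{thm:curve-soliton}. So the three clauses in the conclusion correspond to the three ways a curve can arise as an orbit of a one-parameter subgroup of $\aff(2)$, classified by the trace of $B$ and the value of $k(B)=-2+9\det(B)/\trace^2(B)$.

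First I would show the ``if'' direction clause by clause by running Theorem~\ref{thm:soliton-orbits} in reverse: in each case one must produce a matrix $B$ and vector $d$ so that $c$ is (up to affine transformation and linear reparametrization) the solution of $\dot c=Bc+d$, and then Theorem~\ref{thm:soliton-orbits} immediately gives that $c$ is a soliton of every $M_\alpha$. For a curve of constant equi-affine curvature parametrized proportional to equi-affine arc length, Remark~\ref{rem:keaconst} lists the three normal forms (parabola, ellipse, hyperbola); for the ellipse and hyperbola one reads off $B$ with $\trace B=0$ from Proposition~\ref{pro:affine-dim2}(a) (the linear systems $\dot c = B c$ with $B=\begin{pmatrix}0&-\kea\\1&0\end{pmatrix}$ resp.\ $B=\begin{pmatrix}0&-\kea\\1&0\end{pmatrix}$, $\kea<0$), and for the parabola in equi-affine parametrization one uses Example~\ref{exa:n=2}(e) with the nilpotent $B=\begin{pmatrix}0&1\\0&0\end{pmatrix}$, $d=(0,1)$. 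The special parabola parametrization $c(t)=(\exp(t),\exp(2t))$ up to affine transformation is Example~\ref{exa:n=2}(a) with $\lambda=2$, where $B=\begin{pmatrix}1&0\\0&2\end{pmatrix}$, $d=0$. Finally, for a curve of constant general-affine curvature parametrized proportional to general-affine arc length with $\kea\not\equiv 0$: by the discussion before Proposition~\ref{pro:affine-dim2}, specifying $\ka$ and $\epsilon$ determines $\kea(\sea)$ up to a constant via Equation~\eqref{eq:curv-affine}, hence determines the curve up to an affine transformation; then Proposition~\ref{pro:affine-dim2}(b) shows that the orbit of the one-parameter subgroup attached to the appropriate $B$ (with $\trace B\neq 0$, $k(B)\neq 0$) realizes this curve. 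In every case Theorem~\ref{thm:soliton-orbits} closes the argument.

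For the converse, suppose $c$ is a soliton of $M_\alpha$ with $\dot c(0),\ddot c(0)$ linearly independent. Theorem~\ref{thm:curve-soliton} applies with $n=2$ and yields $\dot c=Bc+d$ with $B=\alpha^{-1}A'(0)$, $d=\alpha^{-1}b'(0)$, so $c(t)=v+F_B(t)(Bv+d)$ is the orbit of a one-parameter subgroup of $\aff(2)$. The nondegeneracy $|\dot c(0)\;\ddot c(0)|=|Bv+d\;\;B(Bv+d)|\neq 0$ means $B\neq 0$ and $Bv+d$ is not an eigenvector of $B$, so Proposition~\ref{pro:affine-dim2} applies. If $\trace B=0$ then part~(a) says $c$ is parametrized proportional to equi-affine arc length with constant equi-affine curvature (a parabola, ellipse, or hyperbola). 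If $\trace B\neq 0$ and $k(B)=0$ then part~(b) says $c$ is a parabola (and one checks, using Example~\ref{exa:n=2}(a) with $\lambda=2$ together with the normal-form list in Remark~\ref{rem:dimension-two}, that the only diagonalizable case producing a parabola with $\trace B\neq 0$ is, up to affine transformation and reparametrization, exactly the parametrization $(\exp t,\exp 2t)$). If $\trace B\neq 0$ and $k(B)\neq 0$ then part~(b) gives $\ka\equiv -2K(B)$ constant and $\sa=\trace(B)\,t/(3K(B))$, so $c$ is parametrized proportional to general-affine arc length. This exhausts the trichotomy and gives the three listed alternatives.

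The main obstacle is not any single estimate but getting the case bookkeeping airtight: one must check that the four Jordan types of $B$ in Remark~\ref{rem:dimension-two}, intersected with the conditions $\trace B=0$, ($\trace B\neq 0$, $k(B)=0$), ($\trace B\neq 0$, $k(B)\neq 0$), really do partition into exactly the three families named, with no overlaps beyond the deliberate one (the parabola, which Example~\ref{exa:n=2}(e) and Example~\ref{exa:n=2}(a) with $\lambda=2$ both produce but with genuinely different parametrizations, as the sentence after Example~\ref{exa:n=2} notes). In particular one must verify that the ``parabola with parametrization $(\exp t,\exp 2t)$'' clause is not subsumed by the equi-affine clause — it is not, because that parametrization is not proportional to equi-affine arc length — and that it is the only non-equi-affine parabola parametrization arising here. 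Once this combinatorial check is done, the theorem follows formally from Theorems~\ref{thm:curve-soliton} and~\ref{thm:soliton-orbits} and Proposition~\ref{pro:affine-dim2}.
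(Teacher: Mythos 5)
Your proposal is correct and follows essentially the same route as the paper, whose entire ``proof'' is the one sentence preceding the theorem citing Theorem~\ref{thm:curve-soliton}, Theorem~\ref{thm:soliton-orbits}, Proposition~\ref{pro:affine-dim2} and Example~\ref{exa:n=2}; you simply flesh out that summary, with the case bookkeeping on $\trace B$ and $k(B)$ matching the paper's classification. The converse direction you include is not required by the statement (which is only an ``if''), but it is consistent with the surrounding results and does no harm.
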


\end{document}